\newtheorem{theorem}{Theorem}[section]
\newtheorem{proposition}[theorem]{Proposition}
\newtheorem{lemma}[theorem]{Lemma}
\theoremstyle{definition}
\newtheorem{problem}[theorem]{Problem}
\newtheorem{question}[theorem]{Question}
\newtheorem{remark}[theorem]{Remark}
\newcommand{\init}{\ensuremath{\mathrm{in}}\hspace{1pt}}
\begin{document}

\title[Lefschetz properties for products of linear forms]{Lefschetz properties for complete intersection ideals generated by products of linear forms}

\author[M. Juhnke-Kubitzke ]{Martina Juhnke-Kubitzke }
\address{
Universit\"{a}t Osnabr\"{u}ck,
Fakult\"{a}t f\"{u}r Mathematik,
Albrechtstra\ss e 28a,
49076 Osnabr\"{u}ck, GERMANY
}
\email{juhnke-kubitzke@uni-osnabrueck.de
}

\author[R.M. Mir\'o-Roig]{Rosa M. Mir\'o-Roig}
\address{Universitat de Barcelona,
Departament de Matem\`{a}tiques i Inform\`{a}tica, Gran Via de les Corts Catalanes
585, 08007 Barcelona, SPAIN
}
\email{miro@ub.edu}

\author[S. Murai]{Satoshi Murai}
\address{Department of Pure and Applied Mathematics,
Graduate School of Information Science and Technology
Osaka University,  Suita, Osaka, 565-0871, JAPAN
}
\email{s-murai AT ist.osaka-u.ac.jp}

\author[A. Wachi]{Akihito Wachi}
\address{Hokkaido University of Education
Department of Mathematics,
Kushiro, 085-8580 JAPAN
}
\email{wachi.akihito@k.hokkyodai.ac.jp}

\date{\today}

\thanks{
The first author was sponsored the German Research Council DFG GRK-1916,
 the second author was sponsored by   MTM2016-78623-P,
 the third author was sponsored by JSPS KAKENHI Grant 16K05102 and
 the fourth author was sponsored by JSPS KAKENHI Grant 15K04812.
}
\keywords{strong Lefschetz property, initial ideals, monomial ideals,  complete intersection, Hilbert function.}
\subjclass[2000]{13E10, 13C13, 13C40}

\begin{abstract}
In this paper, we study the strong Lefschetz property of artinian complete intersection ideals generated by products of linear forms.
We prove the strong Lefschetz property for a class of such ideals with binomial generators.
\end{abstract}

\maketitle

\tableofcontents

\section{Introduction}
Let $K$ be a field of characteristic zero and let $R=K[x_1,\dots,x_n]$ be the polynomial ring over $K$ in $n$ variables.
A graded artinian $K$-algebra $A:=R/I$ is said to have the {\em strong Lefschetz property} ({\em SLP} for short)
if there is a linear form $\ell \in A_1$ such that the multiplication
$$\times \ell ^s: A_{k-s} \to A_k$$ has maximal rank for all $k$ and all $s$, i.e., $\times \ell^s$
is either injective or surjective, for all $k$ and all $s$.
If the multiplication $\times \ell: A_{k-1}\to A_k$ has maximal rank for all $k$, then $A=R/I$ is said to have the {\em weak Lefschetz property} ({\em WLP} for short).
A linear form $\ell$, as above, is called a {\em strong Lefschetz element} (resp. {\em weak Lefschetz element}) of
$A$.
We also say that $I$ has the SLP (resp.\ WLP) if $R/I$ has the SLP (resp.\ WLP).
Though many algebras are expected to have the WLP or even the SLP,
establishing this property is often rather difficult and even in seemingly simple cases, such as complete intersections and ideals generated by products of linear forms, much remains unknown about the presence of the Lefschetz properties.

Lefschetz properties have been studied intensively and a large toolbox, containing different approaches and methods, to check if a graded artinian $K$-algebra $A$ has the WLP or the SLP has been developed. Their study is interesting not only because they put a lot of restrictions on the Hilbert function of a standard graded $K$-algebra but also since they have shown to be connected to a large number of problems, that appear to be unrelated at first glance. 
 Nevertheless, all  research results in this area are motivated and  owe their roots to the following theorem proved  by
Stanley in \cite{St}, Watanabe in \cite{W} and
Reid, Roberts and Roitman in \cite{RRR}: If $K$ is a field of characteristic zero, then the artinian monomial complete intersection ideal $I=(x_1^{a_1},\cdots ,x_n^{a_n})\subset R$
has the strong and, in particular, the weak Lefschetz property (see also \cite{HP} and \cite{I}).
 As a consequence we have that a
\emph{general}
complete intersection artinian ideal with
fixed generator degrees has both, the WLP and the SLP.
It is therefore natural to pose the following question:

\begin{question}[{\cite[Question 3.1]{MN}}]
\label{prob:AllCI}
Do \emph{all} artinian complete intersections have the WLP/SLP, in characteristic zero?
\end{question}

Some evidence that Question \ref{prob:AllCI} has a positive answer is given by the fact that \emph{all} artinian complete intersections in $3$ variables are known to have the WLP. However, it is a challenging and still open problem to decide whether all height 3 artinian complete intersections have the SLP. Similarly, the answer to Question \ref{prob:AllCI} for height 4 ideals is unknown. 

There are plenty of results concerning Lefschetz properties of ideals generated by powers of linear forms (see e.g.,\ \cite{AA,MMN-2012,M2016,SS}), and it is natural to consider generalizations of  those results to ideals generated by products of linear forms. 
From this point of view, it would be interesting to study Question \ref{prob:AllCI} for artinian complete intersections generated by products of linear forms. In this short note, we make a non-trivial contribution to this problem by providing a family of artinian complete intersection ideals of arbitrary height $n$, which have the SLP. More precisely, the following is our main result:

\begin{theorem}\label{mainresult}
Let $d_0,\dots,d_{n-1}$ be positive integers and let $a \in K$ with $a \ne 1$.
The algebra
$$R/(x_n^{d_0}(x_n-a x_1),x_{i}^{d_{i}}(x_{i}-x_{i+1})~|~ \ 1\le i \le n-1)$$
is an artinian complete intersection and has the SLP.
\end{theorem}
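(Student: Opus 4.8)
The plan is to settle the complete intersection statement by a direct zero-dimensionality argument, and then to establish the SLP by degenerating $I$ to a monomial ideal through a Gröbner deformation, checking the Lefschetz property combinatorially on the monomial model and lifting it back. Throughout, write $g_i=x_i^{d_i}(x_i-x_{i+1})$ for $1\le i\le n-1$ and $g_0=x_n^{d_0}(x_n-ax_1)$.

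First I would prove that the $n$ forms cut out only the origin, so that $I$ is $\mideal$-primary and, being generated by $n=\dim R$ elements, is a complete intersection with artinian quotient. At a common zero, each relation $g_i=0$ forces $x_i=0$ or $x_i=x_{i+1}$, and $g_0=0$ forces $x_n=0$ or $x_n=ax_1$. If some coordinate is nonzero, the implication $x_i\neq 0\Rightarrow x_{i+1}=x_i\neq 0$ propagates forward along the chain and forces $x_n\neq 0$; then $x_n=ax_1$ gives $x_1\neq 0$, hence $x_1=\dots=x_n$ and $x_n=ax_n$, i.e. $(1-a)x_n=0$. Since $a\neq 1$ this gives $x_n=0$, a contradiction. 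Thus the origin is the only common zero, and both assertions follow. (The value $a=0$ is allowed and causes no difficulty.)

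For the SLP I would use the standard principle that the property passes from an initial ideal: if $R/\mathrm{in}_{<}(I)$ has the SLP for some term order $<$, then so does $R/I$. This rests on the flatness of the Gröbner degeneration together with lower semicontinuity of the ranks of the maps $\times\ell^s$ in the flat family: the standard monomials form a basis of every fiber, so in this basis the matrices of $\times\ell^s$ have entries polynomial in the degeneration parameter, and maximal rank on the special (monomial) fiber forces maximal rank on the generic fiber, where the algebra is isomorphic to $R/I$; one may keep the Lefschetz element of the monomial fiber fixed. I would then compute $\mathrm{in}_{<}(I)$ for the lexicographic order with $x_1>\dots>x_n$. The chain relations have pairwise coprime leading terms $x_1^{d_1+1},\dots,x_{n-1}^{d_{n-1}+1}$, while $g_0$ has leading term $x_1x_n^{d_0}$ (using $a\neq 0$). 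Were $g_0$ a pure power, Buchberger's coprimality criterion would immediately give a monomial complete intersection; instead the $S$-polynomials created by $g_0$ cascade down the chain, using $a\neq 1$ at each step to yield nonzero remainders, and I expect them to terminate in a staircase monomial ideal $M$ whose exponents are \emph{independent} of $a$ (for $d_i=1$ one finds $M=(x_1^2,\dots,x_{n-1}^2,\ x_1x_n,\,x_2x_n^2,\dots,x_{n-1}x_n^{\,n-1},\ x_n^{\,n+1})$, and analogously in general), with $\dim_K R/M=\prod_{i=0}^{n-1}(d_i+1)$ confirming the count. Since $M$ is the same for every $a\neq 0,1$, proving the SLP for $R/M$ once would settle all such $a$ simultaneously, while $a=0$ gives outright the monomial complete intersection $(x_1^{d_1+1},\dots,x_{n-1}^{d_{n-1}+1},x_n^{d_0+1})$, which has the SLP by the theorem of Stanley, Watanabe and Reid--Roberts--Roitman recalled in the introduction.

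The main obstacle is the final step, proving that $R/M$ has the SLP, for which I would use the explicit element $\ell=x_1+\dots+x_n$. The tempting shortcut—realizing $R/M$ as a tensor product or a free extension of the acyclic chain algebra $A/x_nA$ (which is itself a chain complete intersection and has the SLP, again by degeneration to a complete intersection with coprime leading terms) along the $x_n$-tower—does \emph{not} work, because $x_n^{d_0+1}\neq 0$ in $R/M$; this coupling, forced by the cyclic generator $g_0$, is precisely what makes the theorem nontrivial, and it is why the Hilbert series factors without the algebra splitting. Instead I would argue by induction on $n$, exploiting the recursive description of the standard basis of $R/M$ as $x_n^{c}$ times a standard monomial of a smaller chain algebra that is truncated further as $c$ grows, and I would track the action of $\ell$ on this basis through the exact sequences relating $\times\ell$ on $R/M$, on $R/(M+x_nR)$, and on the $x_n$-torsion. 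The delicate point, which I expect to be the technical heart of the proof, is to show that these truncations align so that every $\ell$-string has exactly the length dictated by the symmetric (Gorenstein) Hilbert function, thereby giving maximal rank of all the maps $\times\ell^{s}$.
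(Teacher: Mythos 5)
Your overall route coincides with the paper's: pass to the lex initial ideal via the semicontinuity result (Proposition \ref{WLP_initialIdeal}), identify that initial ideal for $a\ne 0$ as the monomial ideal
$$M=\bigl(x_1^{d_1+1},\dots,x_{n-1}^{d_{n-1}+1},\ x_ix_n^{d_0+\cdots+d_{i-1}}\ \mid \ 1\le i\le n\bigr),$$
and dispose of $a=0$ by the Stanley--Watanabe theorem; your formula for $M$ is the correct one (the paper verifies it by exhibiting the explicit Gr\"obner basis $\{x_i^{d_i}(x_i-x_{i+1}),\ x_ix_n^{d_0+\cdots+d_{i-1}}-\tfrac1a x_n^{d_0+\cdots+d_{i-1}+1}\}$ and checking Buchberger's criterion --- a step you only assert ``cascades'' correctly, but which is routine to complete), and your zero-locus argument for the complete intersection statement is a perfectly good alternative to the paper's. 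The genuine gap is the final and crucial step: you never prove that $R/M$ has the SLP. You explicitly defer it as ``the technical heart,'' and the strategy you sketch --- induction on $n$ via exact sequences relating $\times\ell$ on $R/M$, on $R/(M+x_nR)$, and on the $x_n$-torsion --- runs into exactly the misalignment you yourself flag: $R/(M+x_nR)\cong K[x_1,\dots,x_{n-1}]/(x_1^{d_1+1},\dots,x_{n-1}^{d_{n-1}+1})$ has socle degree $D-d_0<D$, where $D=d_0+\cdots+d_{n-1}$, so its Hilbert function is not symmetric about $D/2$ and the $\ell$-strings of the terms in such a sequence have incompatible centers; no direct bookkeeping along the $x_n$-direction yields the rank statements.

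The paper's resolution (Theorem \ref{mainthmSLP}) is to slice in the orthogonal direction: filter $A=R/M$ by the ideals $M^{(k)}=(x_1,\dots,x_k)A$, $k=1,\dots,n-1$. Each subquotient is cyclic, with
$$M^{(k)}/M^{(k-1)}\cong K[x_k,\dots,x_n]/\bigl(x_k^{d_k},x_{k+1}^{d_{k+1}+1},\dots,x_{n-1}^{d_{n-1}+1},x_n^{d_0+\cdots+d_{k-1}}\bigr)(-1),$$
a shifted monomial complete intersection of socle degree $D-2$, hence supported in degrees $1,\dots,D-1$ and symmetric about $D/2$ after the shift; likewise $A/M^{(n-1)}\cong K[x_n]/(x_n^{D+1})$ is symmetric about $D/2$. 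Because every piece is centered at the same midpoint, Proposition \ref{monomialCI} gives that $\times(x_1+\cdots+x_n)^{D-2i}$ is bijective from degree $i$ to degree $D-i$ on each piece, and the short exact sequences then propagate this bijectivity to $M^{(k)}$ by induction on $k$ and finally to $A$ itself; Proposition \ref{lem-L-element} reduces the SLP of the monomial algebra to exactly these bijectivity statements. So the missing idea in your outline is to replace the $x_n$-decomposition by the $(x_1,\dots,x_k)$-filtration, whose graded pieces are all complete intersections with the right symmetry; with that substitution your plan becomes the paper's proof.
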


The structure of the paper is as follows. 
After recalling some basic results on Lefschetz properties in Section 2, we prove the main result of this paper (Theorem \ref{mainresult}) and an extension of it in Section 3. 
In Section 4, we present some research problems concerning Lefschetz properties of ideals generated by products of linear forms.

\vskip 4mm
\noindent {\bf Acknowledgements:} This work was started at the workshop ``Lefschetz Properties in Algebra, Geometry and Combinatorics,"
held at the Mittag-Leffler Institute (MLI) in July 2017.
The authors thank MLI for its kind hospitality.


\section{Background and preparatory results}
 In this section, for the sake of completeness, we recall the main tools and results that will be  used in the rest of the paper.

Throughout the following, we denote by $K$ a field
of characteristic zero and by $R =K[x_1,\dots,x_n]$ the graded homogeneous polynomial ring in $n$ variables over $K$.
%
%
Strong respectively weak Lefschetz elements of an artinian algebra $R/I$ are known to form a Zariski open, possibly empty,
subset of $(R/I)_1$. In other words, if the artinian $K$-algebra $R/I$ satisfies the strong or the weak Lefschetz property for some linear form, then it does so for a general linear form. However, for monomial ideals there is no need to consider a general linear form due to the following useful result:

\begin{proposition}[{\cite[Proposition 2.2]{MMN}}]
  \label{lem-L-element}
Let $I \subset R$ be an artinian monomial ideal. Then $R/I$ has the SLP if and only if  $x_1 + \cdots + x_n$ is a strong Lefschetz element for $R/I$.
\end{proposition}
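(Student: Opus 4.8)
The plan is to exploit the torus symmetry that a monomial ideal enjoys, together with the fact, recalled just above, that the strong Lefschetz elements form a Zariski open subset $U\subseteq (R/I)_1$. The forward implication is immediate from the definition of the SLP, so the whole content lies in the converse: assuming $R/I$ has the SLP for \emph{some} linear form, I want to conclude that the specific form $x_1+\cdots+x_n$ lies in $U$.

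First I would set up the group action. Let $T=(K^*)^n$ act on $R$ by $t\cdot x_i=t_ix_i$. Since $I$ is generated by monomials, each of which is an eigenvector for this action, the ideal $I$ is $T$-stable and $T$ acts on $A=R/I$ by graded $K$-algebra automorphisms. The linchpin of the argument is the observation that graded algebra automorphisms preserve the strong Lefschetz property: if $\phi$ is such an automorphism, then from $\phi(\ell^s y)=\phi(\ell)^s\phi(y)$ one obtains $\phi_k\circ(\times\ell^s)=(\times\phi(\ell)^s)\circ\phi_{k-s}$ on each graded piece, and since the $\phi_k\colon A_k\to A_k$ are isomorphisms the maps $\times\ell^s$ and $\times\phi(\ell)^s$ have equal rank; as $\phi$ also preserves the dimensions of the graded pieces, one has maximal rank precisely when the other does. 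Consequently $U$ is a $T$-invariant subset of $(R/I)_1$.

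Next I would describe the $T$-orbits on $(R/I)_1$. A basis of $(R/I)_1$ is given by the images of the variables $x_i\notin I$, and $T$ rescales these coordinates independently; hence two elements lie in the same $T$-orbit exactly when they have the same support. In particular the image of $x_1+\cdots+x_n$ in $(R/I)_1$ is the sum of the surviving variables, each with coefficient $1$, so its orbit is precisely the locus of forms whose coefficient on every surviving variable is nonzero. This full-support locus is the complement of finitely many coordinate hyperplanes, hence dense and open in the irreducible affine space $(R/I)_1$.

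Finally I would combine these facts. By hypothesis $U$ is nonempty and open, hence dense in the irreducible space $(R/I)_1$; intersecting it with the dense open full-support locus produces a strong Lefschetz element $v$ of full support. Because $U$ is $T$-invariant it contains the entire orbit of $v$, which is exactly the full-support locus, and $x_1+\cdots+x_n$ belongs to that locus. Therefore $x_1+\cdots+x_n\in U$, as claimed. I expect no genuine obstacle here beyond the automorphism-invariance step, which is the conceptual heart; once that invariance and the identification of $T$-orbits with support classes are in hand, the density argument closes the proof mechanically.
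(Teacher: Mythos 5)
Your proof is correct and takes essentially the same approach as the source this paper cites for the statement: the argument in \cite[Proposition 2.2]{MMN} is precisely your rescaling idea~--~by openness of the Lefschetz locus one may take a strong Lefschetz element $\ell=a_1x_1+\cdots+a_nx_n$ with all $a_i\neq 0$, and the diagonal substitution $x_i\mapsto a_i^{-1}x_i$ preserves the monomial ideal and carries $\ell$ to $x_1+\cdots+x_n$. Your torus-orbit and density formulation is just a more systematic packaging of that single diagonal automorphism, so there is nothing to correct.
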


We also recall the following well-known result, which can be seen as \emph{the} starting point of the study of Lefschetz properties (see \cite{RRR,St,W} and also \cite{HP,I}).

\begin{proposition}
  \label{monomialCI}
Let $d_1,\dots,d_n$ be non-negative integers and let $D=d_1+\cdots+d_n$.
Then $A=R/(x_1^{d_1+1},\dots,x_n^{d_n+1})=A_0\oplus \cdots \oplus A_D$ has the SLP. 
In particular, the multiplication map
$$\times (x_1+\cdots+x_n)^{D-2i}: A_i \to A_{D-2i}$$
is an isomorphism for $i<\frac D 2$.
\end{proposition}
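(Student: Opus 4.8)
The plan is to realize $A$ as a tensor product of single-variable algebras, endow it with an action of the Lie algebra $\mathfrak{sl}_2(K)$ in which multiplication by $\ell := x_1 + \cdots + x_n$ is the raising operator, and then read off the SLP from the representation theory of $\mathfrak{sl}_2$ in characteristic zero. Concretely, I first record the $K$-algebra isomorphism
$$ A \;=\; R/(x_1^{d_1+1},\dots,x_n^{d_n+1}) \;\cong\; \bigotimes_{i=1}^n K[x_i]/(x_i^{d_i+1}), $$
which is compatible with the gradings, the degree-$k$ component $A_k$ corresponding to the span of monomials $x_1^{a_1}\cdots x_n^{a_n}$ with $a_1+\cdots+a_n=k$ and $0\le a_j\le d_j$. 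By Proposition \ref{lem-L-element} it is moreover enough to prove that this specific $\ell$ is a strong Lefschetz element, and this is exactly the element that the $\mathfrak{sl}_2$-structure will single out.

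Next I would put an $\mathfrak{sl}_2$-module structure on each factor $B_i := K[x_i]/(x_i^{d_i+1})$. Writing $e_i,f_i,h_i$ for the standard generators, I let $e_i$ act as multiplication by $x_i$, let $h_i$ act diagonally by $h_i\cdot x_i^{a}=(2a-d_i)\,x_i^{a}$, and let $f_i\cdot x_i^{a}=a(d_i-a+1)\,x_i^{a-1}$; a direct check of the relations $[h_i,e_i]=2e_i$, $[h_i,f_i]=-2f_i$, $[e_i,f_i]=h_i$ shows that $B_i$ is the irreducible $\mathfrak{sl}_2$-module of dimension $d_i+1$. Taking the tensor-product action on $A=\bigotimes_i B_i$, with $e=\sum_i 1\otimes\cdots\otimes e_i\otimes\cdots\otimes 1$ and likewise for $h$, the operator $e$ is precisely multiplication by $x_1+\cdots+x_n=\ell$, while $h$ acts on $A_k$ as the scalar $\sum_i(2a_i-d_i)=2k-D$. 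Thus $A_k$ is exactly the weight space of $A$ for the weight $2k-D$, and the multiplication map $\times\ell^s\colon A_{k-s}\to A_k$ is the restriction of $e^s$ to weight spaces.

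It then remains to prove the representation-theoretic statement that, in any finite-dimensional $\mathfrak{sl}_2(K)$-module $V$ over a field of characteristic zero, the iterate $e^s$ has maximal rank as a map from the weight-$w$ space $V_{[w]}$ to the weight-$(w+2s)$ space $V_{[w+2s]}$. By Weyl's complete reducibility theorem $V$ splits as a direct sum of irreducibles $V_m$, and $e^s$ respects this decomposition; on each $V_m$ the weight spaces are at most one-dimensional, $e^s$ is injective until it runs off the top weight, and $e^{-w}\colon (V_m)_{[w]}\to(V_m)_{[-w]}$ is an isomorphism for $w\le 0$. Because the set of $m$ with $(V_m)_{[w]}\neq 0$ is nested as $|w|$ grows, summing over the isotypic components gives $\mathrm{rank}(e^s)=\min(\dim V_{[w]},\dim V_{[w+2s]})$, i.e. maximal rank. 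Applied to $V=A$ with $w=2(k-s)-D$ this yields the SLP of $A$ with Lefschetz element $\ell$; taking $s=D-2i$ and $w=2i-D\le 0$ for $i<D/2$ shows that $e^{D-2i}\colon A_i\to A_{D-i}$ (note the target sits in degree $i+(D-2i)=D-i$) is in fact an isomorphism, which is the content of the displayed ``in particular'' statement.

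The main obstacle, and the only place where the hypothesis on the characteristic is genuinely used, is the representation-theoretic lemma of the previous paragraph: everything hinges on complete reducibility of finite-dimensional $\mathfrak{sl}_2$-modules, which is Weyl's theorem and fails outside characteristic zero — consistent with the well-known failure of the SLP for monomial complete intersections in positive characteristic. As a sanity check one can instead invoke the classical Hard Lefschetz theorem for the smooth projective variety $\mathbb{P}^{d_1}\times\cdots\times\mathbb{P}^{d_n}$, whose cohomology ring is exactly $A$ and whose hyperplane class corresponds to $\ell$; this recovers the same isomorphisms but trades the elementary $\mathfrak{sl}_2$ argument for Hodge theory, so I would keep the representation-theoretic proof as the main line.
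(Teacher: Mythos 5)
Your proof is correct, but there is nothing in the paper to compare it against: the paper does not prove Proposition \ref{monomialCI} at all, quoting it as the classical starting point of Lefschetz theory with citations to Stanley \cite{St}, Watanabe \cite{W} and Reid--Roberts--Roitman \cite{RRR}. What you have written out is, in essence, Watanabe's original argument \cite{W}: split $A$ as the tensor product of the algebras $K[x_i]/(x_i^{d_i+1})$, make each factor the irreducible $(d_i+1)$-dimensional $\mathfrak{sl}_2(K)$-module in which multiplication by $x_i$ is the raising operator (your formulas for $e_i,h_i,f_i$ do satisfy the bracket relations, and the lowering coefficients $a(d_i-a+1)$ are nonzero precisely because the characteristic is zero), and then deduce maximal rank of $e^s$ between weight spaces from Weyl's complete reducibility together with the structure of the irreducibles; your ``nested supports'' computation giving $\mathrm{rank}(e^s)=\min(\dim V_{[w]},\dim V_{[w+2s]})$ is the standard way to make that last step precise. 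Your alternative sanity check via hard Lefschetz for $\mathbb{P}^{d_1}\times\cdots\times\mathbb{P}^{d_n}$ is exactly Stanley's proof \cite{St}, so your write-up reconstructs two of the three cited classical arguments. Two minor remarks: the appeal to Proposition \ref{lem-L-element} is superfluous, since exhibiting $x_1+\cdots+x_n$ as a strong Lefschetz element already gives the SLP by definition (that proposition is needed only for the converse direction, which the paper uses elsewhere); and you are right that the target of the displayed map should be $A_{D-i}$ rather than $A_{D-2i}$ --- this is a typo in the statement, as confirmed by the way the proposition is invoked in the proof of Theorem \ref{mainthmSLP}.
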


The following result, that will be crucial for the proof of Theorem \ref{mainresult}, enables us to reduce the study of Lefschetz properties of an arbitrary artinian ideal $I$ to the one of a monomial ideal, by passing to an initial ideal of $I$.

\begin{proposition}[{\cite[Proposition 2.9]{We}}]
  \label{WLP_initialIdeal}
Let $I \subset R$ be an artinian ideal, $\tau$ a term order and  $\init_\tau(I)$ the initial ideal of $I$ with respect to $\tau$. If $R/\init_\tau(I)$ has the SLP (resp.\ WLP), then so has $R/I$.
\end{proposition}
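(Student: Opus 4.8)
The plan is to prove the statement by a Gröbner degeneration combined with the lower semicontinuity of the rank of a family of linear maps. Since $I$ has a finite Gröbner basis with respect to $\tau$, I would first choose a weight vector $w \in \ZZ^n_{\ge 0}$ representing $\tau$ on this basis, so that $\init_w(I) = \init_\tau(I)$. For $t \in K \setminus \{0\}$ let $g_t$ be the grading-preserving automorphism of $R$ determined by $x_i \mapsto t^{w_i} x_i$, and set $I_t := g_t(I)$; after adjusting the sign of $w$ if necessary, the flat limit of the ideals $I_t$ as $t \to 0$ is $\init_\tau(I)$. Extending the family by $I_0 := \init_\tau(I)$, this is the usual Gröbner degeneration: it is flat over $K[t]$, every fiber $R/I_t$ has the same Hilbert function $h$ as $R/I$, and the standard monomials (the monomials not lying in $\init_\tau(I)$) form a $K$-basis of each fiber and, globally, a free $K[t]$-basis of the total family ring $\mathcal{A}=\bigoplus_k \mathcal{A}_k$. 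For $t \ne 0$ the map $g_t$ is an automorphism, so it induces a graded isomorphism $\bar g_t\colon R/I \to R/I_t$, while the special fiber at $t=0$ is exactly $R/\init_\tau(I)$.

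Next I would exploit the hypothesis. By assumption $R/\init_\tau(I)$ has the SLP, so there is a linear form $\ell_0$ for which $\times \ell_0^s\colon (R/\init_\tau(I))_{k-s} \to (R/\init_\tau(I))_k$ has maximal rank for all $k$ and all $s$. I then view multiplication by the \emph{fixed} element $\ell_0^s$ as a $K[t]$-linear map $\mathcal{A}_{k-s} \to \mathcal{A}_k$. Written in the standard-monomial bases, its matrix $M_{k,s}(t)$ has entries in $K[t]$, and its specialization at a value $t_1$ is precisely multiplication by $\ell_0^s$ on the fiber $R/I_{t_1}$. In particular $M_{k,s}(0)$ is the corresponding map on $R/\init_\tau(I)$, which has maximal rank $\min\{h_{k-s},h_k\}$. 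Hence some maximal minor of $M_{k,s}(t)$ does not vanish at $t=0$, so it is a nonzero polynomial in $t$ and therefore vanishes only for finitely many values of $t$.

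Since $R/I$ is artinian, only finitely many pairs $(k,s)$ give nonzero maps, so the set of $t \in K$ for which some $M_{k,s}(t)$ fails to have maximal rank is finite. As $K$ has characteristic zero it is infinite, and I can pick $t_1 \ne 0$ avoiding this finite set; then $\times \ell_0^s$ has maximal rank on $R/I_{t_1}$ for all $k$ and $s$. Finally I would transport this along the graded isomorphism $\bar g_{t_1}\colon R/I \to R/I_{t_1}$: multiplication by $\ell_0^s$ on $R/I_{t_1}$ corresponds to multiplication by $(g_{t_1}^{-1}(\ell_0))^s$ on $R/I$, and since $g_{t_1}^{-1}(\ell_0)$ is again a linear form, it is a strong Lefschetz element of $R/I$. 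Thus $R/I$ has the SLP. The WLP statement is the special case in which only the maps with $s=1$ are retained, and the identical argument applies verbatim.

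As for difficulties, the genuinely structural input is the flatness of the Gröbner degeneration, namely that the standard monomials simultaneously trivialize the family, so that the multiplication matrices have \emph{polynomial} (not merely rational) entries in $t$ and the Hilbert function stays constant; the semicontinuity step is then purely formal. The one further point that requires care is the bookkeeping in the last paragraph: one must check that a strong Lefschetz element of a fiber isomorphic to $R/I$ pulls back to an honest linear form of $R/I$ that is itself a strong Lefschetz element, rather than merely yielding a maximal-rank condition for some transformed multiplier.
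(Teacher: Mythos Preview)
The paper does not give its own proof of this proposition; it is quoted verbatim from \cite{We} and used as a black box. Your Gr\"obner-degeneration argument is the standard one and is correct: flatness of the one-parameter family guarantees a common monomial basis, multiplication by $\ell_0^s$ is then represented by a matrix with polynomial entries in $t$, and lower semicontinuity of rank transfers the maximal-rank condition from the special fiber $t=0$ to a general fiber, which is isomorphic to $R/I$ via a diagonal change of coordinates. This is essentially Wiebe's proof, so there is nothing to compare.

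Two small remarks on presentation. First, the phrase ``after adjusting the sign of $w$ if necessary'' is not quite how one arranges the limit: one does not change the sign of $w$ (which would swap leading and trailing terms) but rather normalizes each generator of the family by the appropriate power of $t$ so that the $t\to 0$ limit picks out the $w$-initial form; since this normalization does not change the ideal $I_t$ for $t\ne 0$, your conclusion is unaffected. Second, your final paragraph is exactly right that the only delicate point is checking that the pulled-back element $g_{t_1}^{-1}(\ell_0)$ is again a linear form; since $g_{t_1}$ is a graded automorphism this is immediate, and you have handled it correctly.
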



\section{A class of complete intersections}

This section is dedicated to the proof of Theorem \ref{mainresult}. Thereby, we provide a class of artinian complete intersection ideals, generated by products of linear forms, that satisfy the SLP.

\begin{lemma}\label{initial}
Let $d_0,d_1,\dots,d_{n-1}$ be positive integers, $a \in K$ with $a \not \in \{0,1\}$, and let
$I=(x_n^{d_0}(x_1-ax_n),x_i^{d_i}(x_i-x_{i+1})~|~1\leq i\leq  n-1).
$
Then the initial ideal $\init_{\mathrm{lex}}(I)$ of $I$ with respect to the lexicographic order is equal to
$$(x_1^{d_1+1},\cdots,x_{n-1}^{d_{n-1}+1}, x_{i}x_{n}^{d_0+\cdots+d_{i-1}} ~|~1\leq i\leq  n).$$
In particular, $R/I$ is an artinian complete intersection.
\end{lemma}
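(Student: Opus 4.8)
The plan is to treat the two claims separately: that $R/I$ is an artinian complete intersection, and that $\init_{\lex}(I)$ equals the displayed monomial ideal, which I call $J$. Set $e_i = d_0 + \cdots + d_{i-1}$, so that $J = (x_1^{d_1+1},\dots,x_{n-1}^{d_{n-1}+1}, x_ix_n^{e_i} \mid 1\le i\le n)$. For the first claim I would argue set-theoretically: each generator factors, so any common zero in $\overline{K}^n$ satisfies $x_n=0$ or $x_1=ax_n$, and for every $i$ either $x_i=0$ or $x_i=x_{i+1}$. Splitting on whether $x_n=0$ and using $a\notin\{0,1\}$, a short chain of implications forces all coordinates to vanish; hence the only common zero is the origin and $R/I$ is artinian. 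Since $I$ is generated by $n=\dim R$ elements and has height $n$, these generators form a regular sequence in the Cohen--Macaulay ring $R$, so $I$ is a complete intersection, with Hilbert series $\prod_{i=0}^{n-1}(1+t+\cdots+t^{d_i})$ and therefore $\dim_K R/I = \prod_{i=0}^{n-1}(d_i+1)$.

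For the description of the initial ideal I would use the standard colength comparison: I will show $J\subseteq \init_{\lex}(I)$ and that $\dim_K R/J = \prod_{i=0}^{n-1}(d_i+1)$. Because passing to an initial ideal preserves the $K$-dimension, $\dim_K R/\init_{\lex}(I)=\dim_K R/I=\prod_{i=0}^{n-1}(d_i+1)$, so an inclusion of monomial ideals with equal finite colength must be an equality. The inclusion is the heart of the proof and the main obstacle: the generators $x_i^{d_i+1}$ ($1\le i\le n-1$) and $x_1x_n^{e_1}$ are immediately the lex-leading terms of $x_i^{d_i}(x_i-x_{i+1})$ and of $x_n^{d_0}(x_1-ax_n)$, but the monomials $x_ix_n^{e_i}$ for $2\le i\le n$ are not leading terms of the given generators and require producing new elements of $I$.

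The key claim is that $h_i := x_n^{e_i}(x_i-ax_n)\in I$ for every $1\le i\le n$. Its lex-leading term is $x_ix_n^{e_i}$ when $i<n$, while $h_n=(1-a)x_n^{e_n+1}$ contributes $x_n^{e_n+1}=x_nx_n^{e_n}$ since $a\ne 1$; together these give $J\subseteq\init_{\lex}(I)$. I would prove the claim by induction on $i$, the base case being $h_1=x_n^{d_0}(x_1-ax_n)$, a generator. For the step, the identity $h_{i+1}=x_n^{d_i}h_i - x_n^{e_i+d_i}(x_i-x_{i+1})$ reduces matters to showing $x_n^{e_i+d_i}(x_i-x_{i+1})\in I$, the first summand lying in $I$ by induction. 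This last membership I would obtain by an inner induction on $k$, proving $x_i^{d_i-k}x_n^{e_i+k}(x_i-x_{i+1})\in I$ for $0\le k\le d_i$. The case $k=0$ is the generator $x_i^{d_i}(x_i-x_{i+1})$; for the step, multiplying $h_i\in I$ by $x_i^{d_i-k-1}x_n^{k}(x_i-x_{i+1})$ gives an element of $I$ equal to $x_i^{d_i-k}x_n^{e_i+k}(x_i-x_{i+1}) - a\,x_i^{d_i-k-1}x_n^{e_i+k+1}(x_i-x_{i+1})$, so, using the $k$-th instance and $a\ne 0$, the $(k+1)$-st instance follows. Taking $k=d_i$ yields the required term.

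Finally I would compute $\dim_K R/J$ by counting the standard monomials $x_1^{c_1}\cdots x_n^{c_n}$: the conditions are $c_i\le d_i$ for $i<n$, and for each $i$ either $c_i=0$ or $c_n<e_i$, the case $i=n$ giving $c_n\le e_n$. Stratifying by $m=c_n\in\{0,\dots,e_n\}$ and using $e_1<\cdots<e_n$, the number of admissible vectors with $c_n=m$ equals $\prod_{i\,:\,e_i>m}(d_i+1)$; summing this over the intervals $[e_j,e_{j+1}-1]$ (with $e_0=0$) telescopes to $\prod_{i=0}^{n-1}(d_i+1)$. This matches $\dim_K R/I$, so the colength comparison forces $\init_{\lex}(I)=J$, and since $J$ contains $x_i^{d_i+1}$ for $i<n$ and $x_n^{e_n+1}$, the ring $R/I$ is indeed artinian, as claimed.
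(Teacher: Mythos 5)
Your proposal is correct, but it takes a genuinely different route from the paper. The paper proves the lemma in one stroke by exhibiting an explicit Gr\"obner basis: it introduces the polynomials $f_i = x_i^{d_i}(x_i-x_{i+1})$ and $g_i = x_ix_n^{e_i} - \tfrac1a x_n^{e_i+1}$ (essentially your $h_i$, up to the substitution $a \leftrightarrow 1/a$ coming from the discrepancy between the forms $x_1-ax_n$ and $x_n-ax_1$ in the statement versus Theorem 1.2), and verifies Buchberger's criterion by checking all $S$-polynomial reductions; artinianness and the complete intersection property are then read off from the initial ideal, which contains pure powers of all variables. You reverse the logical order: you first establish that $R/I$ is an artinian complete intersection independently (Nullstellensatz over $\overline K$ plus unmixedness in the Cohen--Macaulay ring $R$), which gives the colength $\prod_{i=0}^{n-1}(d_i+1)$; you then certify $J \subseteq \init_{\lex}(I)$ by producing the elements $h_i \in I$ through a clean double induction (your identities $h_{i+1} = x_n^{d_i}h_i - x_n^{e_i+d_i}(x_i-x_{i+1})$ and the inner induction using $a \neq 0$ check out); and you close the gap with a standard-monomial count for $J$, which telescopes to the same colength, forcing equality by Macaulay's basis theorem. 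What each approach buys: the paper's Buchberger computation yields the Gr\"obner basis itself (slightly more information, though the ``routine'' $S$-polynomial computations it waves at are the least transparent part), while your argument replaces all reduction bookkeeping with a robust dimension comparison, at the cost of needing the complete intersection property up front --- which you supply correctly, so there is no circularity. Both arguments hinge on discovering the same auxiliary polynomials $x_n^{e_i}(x_i - ax_n)$; the difference is purely in how membership and maximality of the monomial ideal are certified.
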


\begin{proof}
Let
$$f_i = x_i^{d_i}(x_i- x_{i+1}) \quad \text{ for }1\leq i\leq n -1$$
and
$$ g_i = x_ix_n^{d_0+\cdots+d_{i-1}} -  \frac 1 a x^{d_0+\cdots+d_{i-1}+1}_n\quad  \text{ for } 1\leq  i\leq n.$$
We prove the first statement by showing that $G = \{ f_1, \dots , f_{n-1}, g_1, \dots , g_n \}$ is a Gr\"{o}bner basis  of $I$ with respect to the lexicographic order. 
In the following, we write $S(f, g)$ for the $S$-polynomial of $f$ and $g$. 
A routine computation implies the following properties of the possible $S$-polynomials: 

\begin{itemize}
\item $S(f_i, g_i) = -(x_{i+1}x_i^{d_i}x_n^{d_0+\cdots+d_{i-1}} - \frac 1 a x_i^{d_i}x_n^{d_0+ \cdots + d_{i-1}})$ reduces to $-\frac 1 {a^{d_i}} g_{i+1}$ with respect to $\{f_1,\dots,f_n,g_1,\dots,g_i\}$ for $1\leq i\leq  n - 1$.
\item $S(g_i, g_j) = - \frac 1 a (x_j x_n^{d_0+\cdots+ d_{j-1}+1} - x_ix_n^{d_0+\cdots+ d_{j-1}+1})$
reduces to zero with respect to $G$ for $1 \leq i < j\le n- 1$.
\item $\init_{\mathrm{lex}}(f_i)$ and $\init_{\mathrm{lex}}(f_j)$ are relatively prime if $1\leq i\ne j\leq n-1$.
\item $\init_{\mathrm{lex}}(f_i)$ and $\init_{\mathrm{lex}}(g_j)$ are relatively prime if $1\leq i\leq n-1$, $1\leq j\leq n$ and $i\neq j$.
\item $S(g_i,g_n)=-\frac 1 a x_n^{d_1+ \cdots + d_n+2}=- \frac{x_n}{a-1}g_n$ for $1\leq i\leq n-1$ .
\end{itemize}
The above facts combined with Buchberger's criterion guarantee that $G\subset  I$ and that $G$ is a Gr\"{o}bner basis of $I$ with respect to  the lexicographic order.

For the second statement, observe that, since $\init_{\mathrm{lex}}(I)$ contains pure powers of all  variables, $R/\init_{\mathrm{lex}}(I)$  and hence also $R/I$ is artinian. Moreover, as $I$ is generated by $n$ polynomials, $R/I$ is a complete intersection.
\end{proof}

Theorem \ref{mainresult} will finally follow from Proposition \ref{WLP_initialIdeal} and the next statement:

\begin{theorem}
\label{mainthmSLP}
Let $d_0,d_1,\dots,d_{n-1}$ be positive integers and let $$J=(x_1^{d_1+1},\cdots,x_{n-1}^{d_{n-1}+1}, x_{i}x_{n}^{d_0+\cdots+d_{i-1}} ~|~1\leq i\leq  n).$$
Then $R/J$ has the SLP.
\end{theorem}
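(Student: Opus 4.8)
The plan is to start from Proposition~\ref{lem-L-element}: since $J$ is a monomial ideal, it suffices to show that $\ell := x_1 + \cdots + x_n$ is a strong Lefschetz element for $A := R/J$. By Lemma~\ref{initial} the algebra $A$ has the same Hilbert function as the complete intersection $R/I$, hence as the monomial complete intersection $K[y_0,\dots,y_{n-1}]/(y_0^{d_0+1},\dots,y_{n-1}^{d_{n-1}+1})$; in particular this Hilbert function is symmetric and unimodal with socle degree $D = d_0 + \cdots + d_{n-1}$. I would then invoke the standard criterion that, for an artinian algebra whose Hilbert function is symmetric about $D/2$, a linear form $\ell$ is a strong Lefschetz element precisely when the central maps $\times\ell^{\,D-2i}\colon A_i \to A_{D-i}$ are bijective for all $i \le D/2$. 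Since $\dim_K A_i = \dim_K A_{D-i}$, it is enough to prove that each of these maps is injective.

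I would establish this by induction on $n$, the base case $n = 1$ being the single-variable algebra $K[x_1]/(x_1^{d_0+1})$. The inductive step rests on a self-similarity of the family: reducing modulo $x_1$ kills the two generators involving $x_1$ and gives $R/(J + (x_1)) \cong K[x_2,\dots,x_n]/J'$, where $J'$ is an ideal of exactly the same shape in the $n-1$ variables $x_2,\dots,x_n$, with parameters $d_0 + d_1, d_2, \dots, d_{n-1}$ and, crucially, the same socle degree $D$. By the induction hypothesis $R/(J+(x_1))$ has the SLP, with strong Lefschetz element the image $x_2 + \cdots + x_n$ of $\ell$. This yields the short exact sequence $0 \to x_1 A \to A \to R/(J+(x_1)) \to 0$, in which the quotient is already understood and the submodule is identified, via multiplication by $x_1$, with $(A/(0 :_A x_1))(-1)$. (Setting $x_n = 0$ instead produces the monomial complete intersection $K[x_1,\dots,x_{n-1}]/(x_1^{d_1+1},\dots,x_{n-1}^{d_{n-1}+1})$ of Proposition~\ref{monomialCI}, which I would use as an auxiliary comparison.)

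The heart of the argument, and the step I expect to be the main obstacle, is to promote the SLP of the sub- and quotient-pieces to injectivity of $\times\ell^{\,D-2i}$ on all of $A$. The difficulty is that $\ell$ does not respect the natural $x_n$-graded decomposition of $A$: the relations $x_i x_n^{d_0 + \cdots + d_{i-1}}$ force the set of ``free'' low-index variables to shrink as the power of $x_n$ grows, so multiplication by $\ell$ genuinely mixes the layers. I would track $\ell$ through the ladder of multiplication maps attached to the short exact sequence and analyze the annihilator $(0 :_A x_1)$ explicitly on the monomial basis, aiming to show that an element in the kernel of a central power of $\ell$ must die both modulo $x_1$ and after division by $x_1$, and hence vanish. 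Making this rank bookkeeping airtight --- ensuring no cancellation between the two pieces --- is where the precise arithmetic of the exponents $d_0 + \cdots + d_{i-1}$ enters.

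A more conceptual route, which I would pursue in parallel, is to equip $A$ with an $\mathfrak{sl}_2$-triple $(E, F, H)$ in which $E$ is multiplication by $\ell$ and $H$ acts on $A_i$ as the scalar $2i - D$. The grading makes the relations $[H,E] = 2E$ and $[H,F] = -2F$ automatic, so the whole problem reduces to constructing a degree $-1$ lowering operator $F$ --- a suitable operator adapted to the staircase of $J$ --- satisfying the single commutation relation $[E,F] = H$; finite-dimensional representation theory of $\mathfrak{sl}_2$ would then deliver the SLP immediately. Verifying $[E,F] = H$ on the monomial basis is exactly where the combinatorics of the exponents must be used, and I expect that verification to be the genuine crux regardless of which of the two routes one follows.
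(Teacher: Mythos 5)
Your first route is, in substance, the paper's own proof, just organized as an induction on the number of variables rather than on a filtration: the paper fixes $n$ and filters $A$ by the ideals $M^{(k)}=(x_1,\dots,x_k)A$, $1\le k\le n-1$, and your recursion (peel off $x_1$, pass to $A/x_1A$, which you correctly identify as the same family in $n-1$ variables with parameters $d_0+d_1,d_2,\dots,d_{n-1}$ and unchanged socle degree $D$) unrolls to exactly that filtration, since $x_k\bigl(A/(x_1,\dots,x_{k-1})A\bigr)=M^{(k)}/M^{(k-1)}$. The pieces that arise are literally the same shifted monomial complete intersections, and both arguments finish with Proposition~\ref{monomialCI} plus an exact-sequence step.

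The step you flag as ``the main obstacle'' is not an obstacle, and your worry that $\ell$ ``mixes the layers'' is beside the point: the exact-sequence argument never asks $\ell$ to respect a decomposition. Multiplication by $\ell^{D-2i}$ is $R$-linear and the maps in $0\to x_1A\to A\to A/x_1A\to 0$ are $R$-module maps, so the ladder commutes automatically; once the two outer vertical maps are bijective, the five lemma (equivalently, your two-step kernel chase) gives bijectivity in the middle, with no possibility of ``cancellation between the pieces.'' Bijectivity of the outer maps is exactly what you already have in hand. On the quotient it is the inductive hypothesis, upgraded from maximal rank to bijectivity by the symmetry of its Hilbert function, noting that the image of $\ell$ is $x_2+\cdots+x_n$, which is a strong Lefschetz element of the smaller monomial family by Proposition~\ref{lem-L-element}. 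On the sub-piece it follows from the computation you deferred, which is immediate: $J:x_1=(x_1^{d_1},x_2^{d_2+1},\dots,x_{n-1}^{d_{n-1}+1},x_n^{d_0})$, because once $x_n^{d_0}$ lies in the colon ideal the mixed generators $x_ix_n^{d_0+\cdots+d_{i-1}}$, $i\ge 2$, become redundant. Hence $x_1A\cong R/(J:x_1)(-1)$ is a shifted monomial complete intersection of socle degree $D-2$; the shift by $1$ centers it at $D/2$, so the maps $\times\ell^{D-2i}\colon(x_1A)_i\to(x_1A)_{D-i}$ are precisely the central isomorphisms of Proposition~\ref{monomialCI}. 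With these two observations your induction closes, and the speculative $\mathfrak{sl}_2$ alternative is unnecessary. (Your parenthetical comparison via setting $x_n=0$ would not have helped: that quotient has socle degree $d_1+\cdots+d_{n-1}=D-d_0$, so its center is misaligned with $D/2$.)
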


\begin{proof}
Let $D=d_0+d_1+\cdots+d_{n-1}$.
It follows from Lemma \ref{initial}, that the Hilbert series of $J$ is the same as the one of a complete intersection ideal generated by polynomials of degrees $d_0+1,d_1+1,\dots,d_{n-1}+1$. Therefore, by Proposition  \ref{lem-L-element}, it suffices to prove that the multiplication
$$ \times (x_1+\cdots+x_n)^{D-2i}: (R/J)_i \to (R/J)_{D-i}
$$
is an isomorphism for $i<\frac D 2$.

Let $A=R/J$ and for $1\leq i\leq n-1$ let $M^{(i)}=(x_1,\dots,x_i)A$ be the ideal of $A$ generated by the variables $x_1,\dots,x_i$. 
Using induction on $k$, we first prove that the multiplication map
\begin{align}
\label{imaizumi}
\times (x_1+\cdots+x_n)^{D-2i}: M^{(k)}_i \to M^{(k)}_{D-i}
\end{align}
is an isomorphism for all $1\leq k\leq  n-1$ and all $i < \frac D 2$. 
For $k=1$, we have an isomorphism
$$R/(x_1^{d_1},x_2^{d_2+1},\dots,x_{n-1}^{d_{n-1}+1},x_n^{d_0}) (-1)= R/(J:x_1)(-1)
\stackrel{\times x_1} \longrightarrow M^{(1)}$$
and therefore, Proposition \ref{monomialCI} guarantees that the map \eqref{imaizumi} is an isomorphism in this case. \\
Suppose that $k>1$.
Since $M^{(k)}/M^{(k-1)}$ is an ideal of $A/M^{(k-1)}=R/(J+(x_1,\dots,x_{k-1}))$ generated by the  single element $x_k$, we have that
\begin{align*}
M^{(k)}/M^{(k-1)}  &\cong R/((J+(x_1,\dots,x_{k-1})):x_k)(-1)\\
&\cong K[x_k,x_{k+1},\dots,x_n]/(x_k^{d_k},x_{k+1}^{d_{k+1}+1},\dots,x_{n-1}^{d_{n-1}+1},x_n^{d_0+ \cdots +d_{k-1}})(-1),
\end{align*}
which, by Proposition \ref{monomialCI}, has the SLP. Using the induction hypothesis and the  following exact sequence 
\begin{align}
\label{exact}
0 \longrightarrow M^{(k-1)} \longrightarrow M^{(k)}
\longrightarrow M^{(k)}/M^{(k-1)} \longrightarrow 0,
\end{align}
we conclude that \eqref{imaizumi} is an isomorphism for all $1\leq k\leq n-1$ and all $i < \frac D 2$.

Finally, it follows from the short exact sequence
$$0 \longrightarrow M^{(n-1)} \longrightarrow A \longrightarrow A/M^{(n-1)} \cong K[x_n]/(x_n^{D+1}) \longrightarrow 0,$$
$A$ has the SLP.
\end{proof}

Theorem \ref{mainresult} is an almost immediate consequence of Lemma \ref{initial} and Theorem \ref{mainthmSLP}.

\begin{proof}[Proof of Theorem \ref{mainresult}]
Note that, by Proposition \ref{WLP_initialIdeal}, it is enough to show that $R/\init_{\mathrm{lex}}(I)$ has the SLP. 
If $a=0$, then $\init_{\mathrm{lex}}(I)=(x_n^{d_0+1},x_1^{d_1+1},\dots,x_{n-1}^{d_{n-1}+1})$ is an artinian complete intersection and it follows from Proposition \ref{monomialCI} that $R/\init_{\mathrm{lex}}(I)$ has the SLP.

If $a \ne 0$, it follows from Lemma \ref{initial} and Theorem \ref{mainthmSLP} that $R/\init_{\mathrm{lex}}(I)$ has the SLP.
\end{proof}

\begin{remark}
The sum of the variables $x_1+ \cdots +x_n$ may not be a Lefschetz element of $R/I$ in Theorem \ref{mainresult}.
Indeed, we have checked with Macaulay2 \cite{GS} that if $K=\mathbb R$, $n=7$, $d_0=d_1=\cdots =d_6=1$ and $a= \pm \frac 4 {\sqrt {3}}$, then $x_1+ \cdots +x_n$ is not a Lefschetz element for $R/I$. \end{remark}

In the remaining part of this section, we discuss an extension of Theorem \ref{mainresult}.
To do so, let us first fix some notation. Let $M_{n\times n}(K)$ denote the set of all $n\times n$ matrices with entries in $K$. 
To any matrix $A\in M_{n\times n}(K)$ and any tuple
$\vec{d} = (d_1, d_2, \ldots, d_n)$
of positive integers 
we associate the ideal $$I_{A,\vec{d}}:=\left(x_i^{d_i}\left(\sum _{j=1}^na_{ij}x_j\right) ~\mid~ 1\leq i\leq n\right)\subset R.$$
It is known that $I_{A,\vec{d}}$ is an artinian complete intersection ideal if and only if all principal minors of $A$ are non-zero
(see e.g., \cite[Lemma 2.1]{A}, where this equivalence is proved  when $\vec d=(1,1,\dots,1)$).
Since these ideals give a class of artinian complete intersection ideals, it is natural to ask if they have the SLP.
Using our main result Theorem \ref{mainresult} we are able to provide a positive answer to this problem in a special case:

\begin{theorem}
Let $A\in M_{n\times n}(K)$ be a matrix with non-zero principal minors. Assume that each row of $A$  has exactly two non-zero entries. 
Then $R/I_{A,\vec{d}}$ has the SLP.
\end{theorem}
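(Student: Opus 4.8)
The plan is to reduce the statement to the cyclic situation of Theorem~\ref{mainresult} by passing to a well-chosen initial ideal. First I would normalize the presentation. Since the $1\times 1$ principal minors are the diagonal entries $a_{ii}$, all of them are nonzero; hence in each row the second nonzero entry lies off the diagonal, and after dividing the $i$-th generator by $a_{ii}$ we may assume
$$I_{A,\vec d}=\bigl(x_i^{d_i}(x_i+\gamma_i x_{\sigma(i)})\ \mid\ 1\le i\le n\bigr),\qquad \gamma_i\neq 0,\ \sigma(i)\neq i.$$
This data defines a functional digraph $G$ on $\{1,\dots,n\}$ with arcs $i\to\sigma(i)$; since every out-degree equals $1$, each connected component of $G$ consists of a single directed cycle with trees feeding into it.

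Next I would peel off the tree vertices by an initial ideal computation. Let $T$ be the set of vertices not lying on any cycle, and fix a term order $\prec$ in which the variables $x_u$ with $u\in T$ are the largest, ordered by a source-elimination order of the tree part, while within each cycle the variables are ordered as in Lemma~\ref{initial}. If $u\in T$ is a source of $G$, then no generator other than the $u$-th involves $x_u$, so its leading term is the pure power $x_u^{d_u+1}$, which is coprime to every other leading term; iterating this and invoking Buchberger's criterion shows that the tree generators contribute precisely the pure powers $\{x_u^{d_u+1}:u\in T\}$ and have no further interaction. As distinct cycles involve disjoint variables, all remaining $S$-polynomials crossing between cycles reduce to zero as well, so the problem splits over the cycles of $G$ into the cyclic subsystems $I_C$.

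It remains to handle a single cycle $C=(c_1\to c_2\to\cdots\to c_k\to c_1)$, and here the full strength of the hypothesis enters. Rescaling the variables $x_{c_j}\mapsto\lambda_j x_{c_j}$ brings $k-1$ of the coefficients to $-1$ and puts the cyclic subsystem into the normal form of Theorem~\ref{mainresult} with remaining parameter $a$. The loop product $\prod_j\gamma_{c_j}$ is invariant under such rescalings, and a direct expansion gives that the principal minor of $A$ on the index set $C$ equals $\bigl(\prod_j a_{c_jc_j}\bigr)\bigl(1-(-1)^k\prod_j\gamma_{c_j}\bigr)$; its non-vanishing, guaranteed by hypothesis, is exactly the condition $a\neq 1$ needed for Lemma~\ref{initial} and Theorem~\ref{mainresult}. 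Thus Lemma~\ref{initial} identifies $\init_{\lex}(I_C)$ with a monomial ideal of the form treated in Theorem~\ref{mainthmSLP}, which has the SLP. Assembling the pieces,
$$R/\init_{\prec}(I_{A,\vec d})\ \cong\ \Bigl(\bigotimes_{u\in T}K[x_u]/(x_u^{d_u+1})\Bigr)\otimes_K\Bigl(\bigotimes_{C}R_C/\init_{\lex}(I_C)\Bigr),$$
a tensor product whose factors have the SLP by Proposition~\ref{monomialCI} and Theorem~\ref{mainthmSLP}, respectively.

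Finally I would combine the factors. Over a field of characteristic zero the SLP is preserved under tensor products, so $R/\init_{\prec}(I_{A,\vec d})$ has the SLP, and Proposition~\ref{WLP_initialIdeal} then transfers it to $R/I_{A,\vec d}$. I expect the main obstacle to be the bookkeeping of the second step: organizing one term order that simultaneously peels every tree and respects each cycle, and verifying that all the cross $S$-polynomials reduce to zero; a secondary point is the appeal to the tensor-product theorem, which one could instead circumvent by running the filtration argument of Theorem~\ref{mainthmSLP} directly on the combined monomial ideal $\init_{\prec}(I_{A,\vec d})$, ordering the variables so that each successive subquotient is a monomial complete intersection.
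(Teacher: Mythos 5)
Your proposal is correct, and although it rests on the same combinatorial skeleton as the paper's proof~--~the functional digraph whose components are directed cycles with in-trees attached, the rescaling of each cycle to the normal form of Theorem~\ref{mainresult}, and the identification of the non-vanishing of the principal minor indexed by a cycle with the condition $a \neq 1$~--~the way you assemble the pieces is genuinely different. The paper works with the algebra itself: after a change of coordinates it writes $R/I_{A,\vec{d}}$ as an iterated simple extension $S[x_\ell]/(f_\ell)$ (one monic binomial extension for each non-cycle vertex) of the tensor product of the cycle algebras, and then quotes Theorem~\ref{mainresult} for the cycle factors, \cite[Theorem 3.34]{Book} for the tensor product, and \cite[Corollary 4.17]{Book} for the simple extensions. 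You instead perform one global Gr\"obner degeneration: the tree generators acquire pure-power leading terms $x_u^{d_u+1}$ (your source-elimination order guarantees $x_u \succ x_{\sigma(u)}$ for every tree vertex, and since all mixed pairs of leading terms are coprime, Buchberger's first criterion kills the cross $S$-polynomials), each cycle degenerates to the monomial ideal of Lemma~\ref{initial}, the full initial ideal splits as a tensor product of monomial algebras having the SLP by Proposition~\ref{monomialCI} and Theorem~\ref{mainthmSLP}, and Proposition~\ref{WLP_initialIdeal} transfers the SLP back to $R/I_{A,\vec{d}}$. What each approach buys: yours avoids \cite[Corollary~4.17]{Book} entirely and stays within the toolkit the paper already deploys (Lemma~\ref{initial}, Theorem~\ref{mainthmSLP}, tensor products, Wiebe's result), at the price of the global Gr\"obner bookkeeping; the paper's route avoids that bookkeeping but needs the extra simple-extension theorem. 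A small bonus of your write-up is the explicit computation that the cycle's principal minor equals $\bigl(\prod_j a_{c_jc_j}\bigr)\bigl(1-(-1)^k\prod_j\gamma_{c_j}\bigr)$, which makes precise why non-vanishing minors yield $a \neq 1$ (and $a\neq 0$ follows since all $\gamma_i\neq 0$)~--~a verification the paper's proof leaves implicit when it asserts that each block $A_k$ "gives the ideal in Theorem~\ref{mainresult}".
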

The condition that in each row of the matrix $A$ are precisely two non-zero entries just says that the ideal $I_{A,\vec{d}}$ is generated by binomials.

\begin{proof}
We will show that~--~up to a change of coordinates~--~$R/I_{A,\vec{d}}$ is
isomorphic to an extension of a tensor product of algebras that have the SLP.
Since tensor products preserve the SLP  \cite[Theorem 3.34]{Book},
and the above extension also turns out to preserve the SLP, the claim follows.

First note that by permuting the variables $x_i$ and $x_j$, the ideal
$I_{A,\vec{d}}$ is changed to $I_{A',\vec{d'}}$, where $A'$ is obtained from $A$ by 
exchanging the $i$\textsuperscript{th} and $j$\textsuperscript{th} row as well as the $i$\textsuperscript{th} and $j$\textsuperscript{th} column.

Since all principal minors of $A$ are non-zero, we know that the diagonal entries $a_{ii}$ have to be non-zero. Moreover, as each row of $A$ contains exactly two non-zero entries, for any $1\leq i\leq n$ there exists a unique $j_i$ such that the entry in the $i$\textsuperscript{th} row and $j_i$\textsuperscript{th} column of $A$ is non-zero. We denote this entry by $b_{ij_i}$.

We first show the following claim.\\
{\sf Claim:} We can assume that $A$ is of the form \begin{equation}\label{TransformedA}
\begin{pmatrix}
B    &  *    &  *    &  \cdots  &  *       \\
{}   &  A_1  &  0    &  \cdots  &  0       \\
{}   &  {}   &  A_2  &  \cdots  &  0       \\
{}   &  {}   &  {}   &  \ddots  &  \vdots  \\
{}   &  {}   &  {}   &  {}      &  A_r
\end{pmatrix}.
\end{equation}
We associate a directed graph $G_A$ to the matrix $A$ in the following way:  
The set of vertices of $G_A$ is defined to be the set of 
symbols $a_{ii}$ and $b_{ij_i}$ ($1\leq i\leq n$).  
The set of edges is defined
to be the set of  all ordered tuples $(a_{ii}, b_{ij_i})$ and $(b_{ij_i}, a_{j_ij_i})$ ($1\leq i\leq n$). 
As $j_i$ is uniquely determined by $i$, we conclude that $a_{ii}$ and $b_{ij_i}$ both have outdegree  equal to $1$. 
%
Therefore, the finite graph $G_A$ has at least one (directed) cycle,
and distinct cycles are disjoint. If the vertex $b_{ij_i}$ belongs to a cycle, then so do $a_{ii}$ and $a_{j_ij_i}$ (as $b_{ij_i}$ only lies in the edges $(a_{ii},b_{ij_i})$ and $(b_{ij_i},a_{j_i}{j_i})$). Hence, the sets of row and column indices occurring in (vertices of) a cycle coincide.
It follows from the previous discussion that we can assume that $A$ is of the form  \eqref{TransformedA}, where $r\ge1$ is the number of cycles of $G_A$ and the $A_i$ are $n_i\times n_i$-matrix, whose row and column indices correspond to vertices forming a cycle. The matrix $B$ is an $n_0\times n_0$-matrix, whose row indices correspond to vertices $a_{ii}$ not contained in any cycle. This shows the above Claim.

Since the non-zero entries in the first $n_0$ rows and columns of $A$ correspond to vertices of $G_A$ not lying in a cycle, by permuting and scaling the variables we can assume that $B$ is upper triangular with diagonal entries equal to $1$. 
%

%
Furthermore, again by permutation of variables, we can suppose that the only non-zero off-diagonal entries of $A_k$ are the ones directly above the diagonal or the one in the bottom left corner of $A$. Note that, multiplying the $i$\textsuperscript{th} column of $A$ by $\lambda \in K\setminus\{0\}$ corresponds to a scaling of the variables ($x_i \mapsto \lambda x_i$), whereas multiplying the $i$\textsuperscript{th} row of $A$ by $\lambda$ does not change the ideal at all. Hence, by scaling the variables, we can even assume that $A_k$ has the following shape:
\begin{align*}
A_k =
\begin{pmatrix}
1       &  -1      &  0       &  \cdots  &  0       \\
0       &  1       &  -1      &  \ddots  &  \vdots  \\
\vdots  &  \ddots  &  \ddots  &  \ddots  &  0       \\
0       &  {}      &  \ddots  &  1       &  -1      \\
a       &  0       &  \cdots  &  0       &  1
\end{pmatrix},
\end{align*}
where $a\in K\setminus\{0\}$ depends on $A_k$.  
Note that $A_k$ gives the ideal in Theorem~\ref{mainresult}.
We finally infer from the previous argumentation that
\begin{align*}
R/I_{A,\vec{d}} \cong
\frac{
\left(
\bigotimes_{k=1}^r K[\text{$n_k$ variables}] / I_{A_k,\vec{d_k}}
\right)
[x_1,x_2,\ldots,x_{n_0}]
}{
(f_1,f_2,\ldots,f_{n_0})
},
\end{align*}
where $f_\ell$ ($1\leq \ell\leq n_0$) is a homogeneous binomial that is monic in $x_\ell$, 
and $\vec{d_k}$ ($1\leq k\leq r$) contains the entries of $\vec{d}$ corresponding to $A_k$.
The above tensor product has the SLP,
since each factor has the SLP by Theorem~\ref{mainresult}.
Then the right-hand side of the above equation can be 
considered as the repetition of a simple extension
$S[x_\ell]/(f_\ell)$ of an algebra $S$ that has the SLP
for $1\leq \ell\leq n_0$ (in reverse order). 
Finally the right-hand side has the SLP by \cite[Corollary~4.17]{Book},
which finishes the proof of the theorem.
\end{proof}

If $A$ is  an upper triangular matrix, then the initial ideal $\init_{\mathrm{lex}}(I_{A,\vec{d}})$ with respect to the lexicographic order is a monomial complete intersection. So, $R/\init_{\mathrm{lex}}(I_{A,\vec{d}})$ has the SLP and, applying Proposition \ref{WLP_initialIdeal}, we conclude that the same holds for $R/I_{A,\vec{d}}$.


\section{Final comments and open problems}
In this section, we will present some open questions for further research. 

To simplify the notation,
we will write $I_A=I_{A,(1,1,\dots,1)}$.
Our study of Theorem \ref{mainresult} and the ideals $I_{A,\vec d}$ is motivation by the following fact:
If $I$ a quadratic artinian complete intersection ideal generated by products of linear forms, then~--~by applying an appropriate change of coordinates~--~ it follows that $R/I$ is isomorphic to $R/I_{A,(1,\ldots,1)}$ for some $A$. 
%
%
%
%
Thus an affirmative answer to the next question will give an affirmative answer to Question \ref{prob:AllCI} for quadratic complete intersection ideals generated by products of linear forms.

\begin{problem} \label{prob:Minors}
Does  $R/I_A$ (or more generally $R/I_{A,\vec{d}}$) have the WLP/SLP, if all principal minors of $A$ are non-zero?
\end{problem}

We also propose some special instances of Problem \ref{prob:Minors}.
\begin{problem} Does $R/I_A$ (or more generally $R/I_{A,\vec{d}}$) have the WLP/SLP if $A$ is integral and all its principal minors are equal to  $\pm 1$?
\end{problem}

Since a positive definite symmetric matrix $A$ always has non-zero principal minors, we suggest to consider the following problem.

\begin{problem} Does $R/I_A$ (or more generally $R/I_{A,\vec{d}}$) have the WLP/SLP if $A$ is  a positive definite symmetric matrix?
\end{problem}

More generally, we suggest to study the following problem:

\begin{problem}
Let $I$ be an artinian complete intersection ideal generated by forms $f_i$ of  degree $a_i$. Assume that $f_i=\ell _1^{(i)}\cdots \ell _{a_i}^{(i)}$ is a product of $a_i$ linear forms. Does $R/I$ have the WLP/SLP?
More specifically, one can ask the same question for the case that all $f_i$ are of the same degree, i.e., $a_i=d$ for all $i$.
\end{problem}

As an extension of the previous problem, we propose the following problem:
\begin{problem} Study the WLP/SLP for ideals generated by products of linear forms.
\end{problem}

Concerning this last problem it is worthwhile to point out that there is a huge list of papers dealing with ideals generated by powers of linear forms. For more information on this subject the reader can see, for instance, \cite{HSS}, \cite{MMN-2012}, \cite{M2016} and \cite{SS}. The problem is really subtle since a minuscule change can alter the behavior of the WLP. Indeed, the ideals
$$I_1=(x_1^4,x_2^4,x_3^4,x_4^4,x_1x_2x_3x_4) \text{ and } I_2=(x_1^4,x_2^4,x_3^4,x_4^4,x_1x_2x_3(x_1+x_4))$$
have the same Hilbert function: $1 \ 4 \ 10 \ 20 \ 30 \ 36 \ 34 \ 24 \ 12 \ 4 \ 0 $
but $I_1$ never has the WLP while $I_2$ does have the WLP.

\begin{remark} One might suspect that Problem \ref{prob:Minors} generalizes to almost complete intersection ideals generated by quadrics, which are products of linear forms. However, in \cite[Theorem 2.12]{M2016}, the second author proved that this is not the case. Indeed, for $n=6$ and all $n\ge 8$, the artinian ideal $I=(\ell^2_1,\ldots, \ell^2_{n+1})\subset  K[x_1, \cdots,x_n]$ generated by the square of $n+1$ general linear forms fails the WLP. Nevertheless, we have checked with Macaulay2 that the artinian  ideal $I=(\ell_1\ell _1',\ldots, \ell_{7}\ell _{7}')\subset  K[x_1, \cdots,x_6]$ generated by the products of  general linear forms  has the WLP.
\end{remark}


\begin{thebibliography}{1}


\bibitem{A}
A. Abedelfatah,
On the Eisenbud-Green-Harris Conjecture,
\textit{Proc. Amer. Math.Soc.} \textbf{143} (2015), 105--115.

\bibitem{AA}
C. Almeida and A.V. Andrade,
Lefschetz property and powers of linear forms in $K[x,y,z]$,
Available on the arXiv at {\tt https://arxiv.org/abs/1703.07598}, 2017


\bibitem{GS}
D. Grayson and M. Stillman,
Macaulay 2, a software system for research in algebraic geometry,
Available at {\tt http://www.math.uiuc.edu.Macaulay2/}


\bibitem{HSS}
B. Harbourne, H. Schenck, A. Seceleanu,
Inverse systems, Gelfand-Tsetlin patterns and the weak Lefschetz property,
\textit{J. Lond. Math. Soc.} \textbf{84} (2011) 712--730.

\bibitem{Book}
T. Harima, T. Maeno, H. Morita, Y. Numata, A. Wachi, and J. Watanabe,
The Lefschetz properties,
\textit{Lecture Notes in Mathematics}, \textbf{2080} (2013), Springer-Verlag.


\bibitem{HP}
J. Herzog and D. Popescu,
The strong Lefschetz property and simple extensions,
preprint. Available
on the arXiv at {\tt http://front.math.ucdavis.edu/0506.5537}.

\bibitem{I}
H. Ikeda,
Results on Dilworth and Rees numbers of artinian local rings,
\textit{Japan. J. Math.} \textbf{22} (1996), 147--158.

\bibitem{MMN}
J. Migliore, R. Mir\'o-Roig, and U. Nagel,
Monomial ideals, almost complete intersections and the Weak Lefschetz property,
\textit{Trans. Amer. Math. Soc.} \textbf{363} (2011), 229--257.

\bibitem{MMN-2012} J. Migliore, R. Mir\'o-Roig and U. Nagel,
On the weak Lefschetz property for powers of linear forms,
\textit{Algebra and Number Theory} \textbf{6} (2012), 487--526.


\bibitem{MN}
J. Migliore and U. Nagel,
Survey article: a tour of the weak and strong Lefschetz properties,
\textit{J. Commut. Algebra} \textbf{5} (2013), 329--358.


\bibitem{M2016}
R. Mir\'o-Roig,
Harbourne, Schenck and Seceleanu's Conjecture,
\textit{Journal of Algebra} \textbf{462} (2016), 54--66

\bibitem{RRR}
L. Reid, L. Roberts and M. Roitman,
On complete intersections and their Hilbert functions,
\textit{Canad. Math. Bull.} \textbf{34} (4) (1991), 525--535.

\bibitem{SS}
H. Schenck and A. Seceleanu,
The weak Lefschetz property and powers of linear forms in $k[x,y,z]$,
\textit{Proc. Amer. Math. Soc.} \textbf{138} (2010), 2335--2339.


\bibitem{St}
R. Stanley,
Weyl groups, the hard Lefschetz theorem, and the Sperner property,
\textit{SIAM J. Algebraic
Discrete Methods} \textbf{1} (1980), 168--184.

\bibitem{W}
J. Watanabe,
The Dilworth number of artinian rings and finite posets with rank function,
\textit{Commutative Algebra and Combinatorics, Advanced Studies in Pure Math.} \textbf{11}, Kinokuniya
Co. North Holland, Amsterdam (1987), 303--312.

\bibitem{We}
A. Wiebe,
The Lefschetz property for componentwise linear ideals and Gotzmann ideals.
\textit{Commun. Algebra} \textbf{32}(2004), 4601--4611.
\end{thebibliography}
\end{document}